\documentclass[11pt,reqno]{amsart}
\usepackage{color}

\usepackage{amsmath}
\usepackage{amsfonts,amscd}
\usepackage{amssymb}
\usepackage{url}
\usepackage{hyperref}

\usepackage[english]{babel}
\usepackage{booktabs}
\usepackage{tikz}

\theoremstyle{plain}
\newtheorem{theorem}                 {Theorem}      [section]

\newtheorem{lemma}        [theorem]  {Lemma}

\theoremstyle{definition}

\newtheorem{definition}   [theorem]  {Definition}

\newtheorem{example}      [theorem]  {Example}

\newtheorem{notation}     [theorem]  {Notation}

\numberwithin{equation}{section}

\def \cn{{\mathbb C}}

\def \qn{{\mathbb Q}}
\def \rn{{\mathbb R}}

\def \E{\mathcal E}

\def \L{\mathcal L}

\def \P{\mathcal P}

\def\nab#1#2{\hbox{$\nabla$\kern -.3em\lower 1.0 ex
		\hbox{$#1$}\kern -.1 em {$#2$}}}
\def\hatnab#1#2{\hbox{$\nabla$\kern -.3em\lower 1.0 ex
		\hbox{$#1$}\kern -.1 em {$#2$}}}

\def \SL2{\widetilde{\text{\bf SL}}_{2}(\rn)}

\DeclareMathOperator{\Div}{div}

\numberwithin{equation}{section}
\allowdisplaybreaks

\begin{document}
	
\subjclass[2020]{53C35, 53C43, 58E20}
	
\title
[New Harmonic Morphisms via rational exponents]
{New Complex-Valued Harmonic Morphisms\\ via Rational Exponents}


\begin{abstract}
The method of complex-valued eigenfamilies has been used to solve difficult non-linear problems in differential geometry.  The purpose of this note is to generalise the construction method from the use of polynomial solutions to those obtained by rational exponents.  This gives many new solutions to the problem under investigation.
\end{abstract}

\author{Sigmundur Gudmundsson}
\address{Mathematics, Faculty of Science\\
	University of Lund\\
	Box 118, Lund 221 00\\
	Sweden}
\email{Sigmundur.Gudmundsson@math.lu.se}

\maketitle	

\section{Introduction}
\label{section-introduction}

The study of minimal submanifolds of a given ambient space plays a central role in differential geometry.  This has a long, interesting history and has attracted the interests of profound mathematicians for many generations.  The famous Weierstrass-Enneper representation formula, for minimal surfaces in three-dimensional Euclidean space, brings {\it complex analysis} into play as a useful tool for the study of these beautiful objects.

This was later generalised to the study of minimal surfaces in much more general ambient manifolds via {\it harmonic conformal immersions}.  The next  result follows from the seminal paper \cite{Eel-Sam} of Eells and Sampson from 1964.  For this see also Proposition 3.5.1 of \cite{Bai-Woo-book}.

\begin{theorem}
	Let $\phi:(M^m,g)\to (N,h)$ be a smooth conformal map between Riemannian manifolds.  If $m=2$ then $\phi$ is harmonic if and only if the image is minimal in $(N,h)$.
\end{theorem}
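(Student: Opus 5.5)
The plan is to compute the tension field $\tau(\phi)=\operatorname{trace}\nabla d\phi$ and split it into its components tangent and normal to the image. Conformality is what links the normal part to the mean curvature of the image and the tangent part to the conformal factor. Throughout I work locally where $\phi$ is an immersion. A conformal map satisfies $\phi^*h=\lambda^2 g$, and where $\lambda\neq 0$ it is an immersion; I interpret "the image is minimal" on that regular set. Points with $\lambda=0$ form a set with empty interior unless $\phi$ is constant near them (for harmonic maps this follows by unique continuation), so continuity of $\tau(\phi)$ handles them.

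\textbf{Step 1: the normal component.} Near a regular point, let $P=\phi(U)$ carry the induced metric $\phi^*h=\lambda^2 g$. The Gauss formula gives
\[
\nabla d\phi(X,Y)=\nabla^{\bar g}d\phi(X,Y)+B(d\phi X,d\phi Y),
\]
where the first term is the second fundamental form of $\phi:(M,g)\to(P,\lambda^2g)$ and $B$ is the second fundamental form of $P\subset N$. Take a $g$-orthonormal frame $e_1,e_2$. Then $d\phi(e_i)/\lambda$ is orthonormal in $P$, so the trace of the $B$-part equals $\lambda^2\cdot 2H$, where $H$ is the mean curvature vector of $P$. This part is normal to $P$.

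\textbf{Step 2: the tangential component.} The map $\phi:(M,g)\to(P,\lambda^2 g)$ is essentially the identity map between conformally related metrics. Its tension field is computed from the standard formula for a conformal change $\bar g=e^{2f}g$ with $f=\log\lambda$:
\[
\bar\nabla_XY-\nabla_XY=X(f)Y+Y(f)X-g(X,Y)\operatorname{grad}f .
\]
Taking the trace over the orthonormal frame gives
\[
\tau=(2-m)\operatorname{grad}f .
\]
For $m=2$ this vanishes identically. This is the key computation, and the only place where $m=2$ enters.

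\textbf{Step 3: conclusion.} We obtain $\tau(\phi)=2\lambda^2 H$ on the regular set. Hence $\tau(\phi)=0$ if and only if $H=0$, that is, if and only if the image is minimal.

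The main obstacle is the bookkeeping in Step 2: getting the sign and the factor $(2-m)$ right so that the tangential part cancels exactly in dimension two. The discussion of critical points is a minor technical issue.
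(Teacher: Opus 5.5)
The paper gives no proof of this statement. It attributes the result to Eells--Sampson and refers to Proposition 3.5.1 of Baird--Wood. Your argument is correct, and it is the standard proof behind that reference. The composition formula splits the tension field as $\tau(\phi)=(2-m)\,d\phi(\operatorname{grad}\ln\lambda)+m\lambda^2H$ on the regular set. The tangential term vanishes identically when $m=2$, leaving $\tau(\phi)=2\lambda^2H$. Your conformal-change computation and the trace of the second fundamental form are both right.

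One small correction concerns the critical points. Unique continuation is not needed there. In the direction ``minimal $\Rightarrow$ harmonic'' it is also not available, because harmonicity is what you are trying to prove. The argument you actually need is the trivial one: on the interior of $\{\lambda=0\}$ we have $d\phi=0$, so $\phi$ is locally constant there and $\tau(\phi)=0$. Hence $\tau(\phi)$ vanishes on the open dense set $\{\lambda\neq0\}\cup\operatorname{int}\{\lambda=0\}$, and by continuity it vanishes everywhere.
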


This result has turned out to be very useful in the construction of minimal surfaces in Riemannian symmetric spaces of various types.  For this we refer to \cite{Cal},
\cite{Eel-Woo} and \cite{Bur-Gue}, just to name a few.
\smallskip

In their work \cite{Bai-Eel} from 1981, Baird and Eells have shown that complex-valued harmonic morphisms from Riemannian manifolds are useful tools for the study of minimal submanifolds of codimension two.

\begin{theorem}\label{theorem-Bai-Eel-special}\cite{Bai-Eel}
Let $\phi:(M,g)\to\cn$ be a complex-valued harmonic morphism from a Riemannian manifold.  Then every regular fibre of $\phi$ is a minimal submanifold of $(M,g)$ of codimension {\it two}.
\end{theorem}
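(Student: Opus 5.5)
The plan is to use the Fuglede–Ishihara characterisation: a complex-valued map $\phi:(M,g)\to\cn$ is a harmonic morphism exactly when it is harmonic, $\tau(\phi)=0$, and horizontally weakly conformal, $g(\nabla\phi,\nabla\phi)=0$. Here $\nabla\phi$ is the complexified gradient and $g$ is extended complex-bilinearly. Write $\phi=u+iv$. Conformality then says $|\nabla u|=|\nabla v|$ and $\langle\nabla u,\nabla v\rangle=0$. At a regular point $x$ the gradients $X=\nabla u$ and $Y=\nabla v$ are therefore orthogonal, nonzero and of equal length $\lambda$. They span the horizontal space $\mathcal H_x$, which is the normal space of the fibre $F=\phi^{-1}(\phi(x))$; the vertical space $\mathcal V_x=T_xF$ is its orthogonal complement, of dimension $m-2$.

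Step one expresses the mean curvature of $F$ through $\tau(\phi)$. Take a local orthonormal frame $e_1,\dots,e_{m-2}$ of $\mathcal V$ near $x$. For a horizontal vector $Z$ the second fundamental form gives $g(B(e_j,e_j),Z)=g(\nabla_{e_j}e_j,Z)$. Since $u$ is constant on $F$, we have $e_j(u)=0$, and differentiating gives $0=e_je_j(u)=\nabla^2u(e_j,e_j)+(\nabla_{e_j}e_j)(u)$. Hence $(m-2)\,g(\mu^{\mathcal V},\nabla u)=\sum_j g(\nabla_{e_j}e_j,\nabla u)=-\sum_j\nabla^2u(e_j,e_j)$, and the same identity holds for $v$. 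The Laplacian splits as $\Delta u=\sum_j\nabla^2u(e_j,e_j)+\nabla^2u(X/\lambda,X/\lambda)+\nabla^2u(Y/\lambda,Y/\lambda)$, and likewise for $v$. Harmonicity kills $\Delta u$ and $\Delta v$, so the mean curvature components reduce to the purely horizontal Hessian terms.

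Step two, which I expect to be the main obstacle, is showing that these horizontal Hessian terms vanish, or more precisely that they combine to zero in both normal directions. I would differentiate the conformality relations along horizontal directions. From $|\nabla u|^2=|\nabla v|^2$ we get $\nabla^2u(X,W)=\nabla^2v(Y,W)$ for all $W$. From $\langle\nabla u,\nabla v\rangle=0$ we get $\nabla^2u(Y,W)+\nabla^2v(X,W)=0$.

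Now compute the $u$-term. Taking $W=X$ in the first relation gives $\nabla^2u(X,X)=\nabla^2v(Y,X)$. Taking $W=Y$ in the second gives $\nabla^2u(Y,Y)=-\nabla^2v(X,Y)$. Adding, $\nabla^2u(X,X)+\nabla^2u(Y,Y)=0$ by symmetry of the Hessian. The same manipulation with the roles of $u$ and $v$ exchanged gives $\nabla^2v(X,X)+\nabla^2v(Y,Y)=0$.

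Combining with step one, $\sum_j\nabla^2u(e_j,e_j)=\Delta u=0$ and similarly for $v$. Thus $g(\mu^{\mathcal V},X)=g(\mu^{\mathcal V},Y)=0$. Since $X$ and $Y$ span the normal space, $\mu^{\mathcal V}=0$ and $F$ is minimal of codimension two.

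One should also check that regular fibres are genuinely codimension-two submanifolds. This holds because at regular points $d\phi_x\neq0$, and conformality then forces $d\phi_x$ to be surjective onto $\cn$, so the implicit function theorem applies. Finally, I would remark that in complex notation the key identity in step two is exactly $\nabla^2\phi(\nabla\phi,\nabla\phi)=\tfrac12\nabla\phi\big(g(\nabla\phi,\nabla\phi)\big)=0$. That formulation gives a compact way to write the argument.
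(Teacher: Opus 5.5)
Your proof is correct. The paper gives no proof of this statement; it is quoted from Baird and Eells, who derive it from the stress--energy tensor $S_\phi=\tfrac12|d\phi|^2g-\phi^*h$, which satisfies $\operatorname{div}S_\phi=-\langle\tau(\phi),d\phi\rangle$. In its standard modern form it is the case $n=2$ of the fundamental equation for a horizontally weakly conformal map $\phi:(M^m,g)\to(N^n,h)$ with dilation $\lambda$, namely
\[
\tau(\phi)=-(n-2)\,d\phi(\operatorname{grad}\ln\lambda)-(m-n)\,d\phi(\mu^{\mathcal V}),
\]
valid at regular points.

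Your argument is a hands-on version of this for the flat target $\cn$. Step one is the vertical part of the trace. Step two, obtained by differentiating $|\nabla u|^2=|\nabla v|^2$ and $\langle\nabla u,\nabla v\rangle=0$, is the vanishing of the horizontal term $-(n-2)\,d\phi(\operatorname{grad}\ln\lambda)$ for $n=2$, proved without the dilation or the stress--energy tensor. Step two uses only conformality, not harmonicity. So your computation in fact gives $\tau(\phi)=-(m-2)\,d\phi(\mu^{\mathcal V})$ on the regular set, and the converse (horizontally conformal with minimal regular fibres implies harmonic there) comes for free. The general formula gives more: arbitrary target dimension, curved targets, and a clear view of why a two-dimensional codomain is what kills the dilation term.

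One correction, which affects only your closing remark. The complex form of Step two is not $\nabla^2\phi(\nabla\phi,\nabla\phi)=0$. Your two differentiated relations together say exactly that $\nabla^2\phi(\nabla\phi,W)=0$ for every $W$. The horizontal trace you need is
\[
\nabla^2\phi(X,X)+\nabla^2\phi(Y,Y)=\nabla^2\phi(\nabla\phi,\overline{\nabla\phi})=\tfrac12\,\overline{\nabla\phi}\big(g(\nabla\phi,\nabla\phi)\big)=0,
\]
so you must differentiate $g(\nabla\phi,\nabla\phi)$ in the direction $\overline{\nabla\phi}$, not $\nabla\phi$. The identity $\nabla^2\phi(\nabla\phi,\nabla\phi)=0$ is true, but it only encodes $\nabla^2u(X,X)-\nabla^2u(Y,Y)=2\nabla^2v(X,Y)$ and $\nabla^2v(X,X)-\nabla^2v(Y,Y)=-2\nabla^2u(X,Y)$. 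These do not force the trace to vanish, so a compact write-up built on that identity would not close the argument.
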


This can be seen as dual to the above-mentioned generalisation of the Weierstrass-Enneper representation. Harmonic morphisms are the much studied {\it horizontally conformal harmonic maps}.  For an introduction to the general theory we recommened the book \cite{Bai-Woo-book}, by Baird and Wood, and the regularly updated online bibliography \cite{Gud-bib}.
\medskip

The method of complex-valued eigenfamilies was introduced by Anna Sakovich and the current author in 2008, see \cite{Gud-Sak-1}.  This has since been used to construct explicit solutions to difficult non-linear problems in differential geometry.  The most notable cases is that of complex-valued harmonic morphisms on Riemannian manifolds.  These constructions are based on the following result.

\begin{theorem}\cite{Gud-Sak-1}\label{theorem-polynomial}
Let $(M,g)$ be a Riemannian manifold and 
$$
\E =\{\phi_1,\dots,\phi_n\}
$$ 
be a finite eigenfamily of complex-valued functions on $M$. If $P,Q:\cn^n\to\cn$ are linearily independent homogeneous polynomials of the same positive degree then the quotient
$$
\frac{P(\phi_1,\dots ,\phi_n)}{Q(\phi_1,\dots ,\phi_n)}
$$ 
is a non-constant harmonic morphism on the open and dense subset 
$$
\Omega (Q)=\{p\in M\,|\, Q(\phi_1(p),\dots ,\phi_n(p))\neq 0\}.
$$
\end{theorem}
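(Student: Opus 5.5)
We work directly with the definitions. A family $\E=\{\phi_1,\dots,\phi_n\}$ is an eigenfamily if there are $\lambda,\mu\in\cn$ such that for all $j,k$
$$
\tau(\phi_j)=\lambda\,\phi_j,\qquad \kappa(\phi_j,\phi_k)=\mu\,\phi_j\phi_k .
$$
Here $\tau$ is the tension field (Laplace–Beltrami operator) and $\kappa(\phi,\psi)=g(\nabla\phi,\nabla\psi)$ is the complex-bilinear conformality operator. A complex-valued map $\Phi$ is a harmonic morphism precisely when $\tau(\Phi)=0$ and $\kappa(\Phi,\Phi)=0$, i.e.\ it is harmonic and horizontally conformal. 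The plan is to compute $\tau$ and $\kappa$ for the quotient $\Phi=P(\phi)/Q(\phi)$ on $\Omega(Q)$ and show both vanish.

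\textbf{Step 1: polynomials in the family.} Let $P,Q$ be homogeneous of degree $d$, and write $P_j=\partial P/\partial z_j$. By the chain rule,
$$
\tau(P(\phi))=\sum_j P_j\,\tau(\phi_j)+\sum_{j,k}P_{jk}\,\kappa(\phi_j,\phi_k).
$$
The eigenfamily identities turn this into
$$
\lambda\sum_j z_jP_j+\mu\sum_{j,k}z_jz_kP_{jk}.
$$
Euler's identity gives $\sum_j z_jP_j=dP$ and $\sum_{j,k}z_jz_kP_{jk}=d(d-1)P$. Hence
$$
\tau(P(\phi))=\bigl(d\lambda+d(d-1)\mu\bigr)P(\phi)=:cP(\phi).
$$
Similarly
$$
\kappa(P(\phi),Q(\phi))=\sum_{j,k}P_jQ_k\,\mu\,\phi_j\phi_k=\mu d^2\,P(\phi)Q(\phi).
$$
So $\{P(\phi),Q(\phi)\}$ is again an eigenfamily, with the same constant $c$ and with $\kappa$-constant $\mu d^2$.

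\textbf{Step 2: the quotient.} On $\Omega(Q)$ we apply the product and chain rules to $\Phi=P/Q$:
$$
\tau(P/Q)=\frac{\tau(P)}{Q}-\frac{2\kappa(P,Q)}{Q^2}-\frac{P\,\tau(Q)}{Q^2}+\frac{2P\,\kappa(Q,Q)}{Q^3}.
$$
Substituting Step 1, this becomes
$$
\frac{P}{Q}\bigl(c-2\mu d^2-c+2\mu d^2\bigr)=0.
$$
Likewise
$$
\kappa(P/Q,P/Q)=\frac{\kappa(P,P)}{Q^2}-\frac{2P\,\kappa(P,Q)}{Q^3}+\frac{P^2\,\kappa(Q,Q)}{Q^4}=\mu d^2\frac{P^2}{Q^2}(1-2+1)=0.
$$
Therefore $\Phi$ is harmonic and horizontally conformal, i.e.\ a harmonic morphism, on $\Omega(Q)$.

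\textbf{Step 3: density and non-constancy.} $Q(\phi)$ is an eigenfunction of the elliptic operator $\tau-c$, so by unique continuation it cannot vanish on an open set unless it vanishes identically. This gives that $\Omega(Q)$ is open and dense, provided $Q(\phi)\not\equiv0$.

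Non-constancy is the main obstacle. Linear independence of $P,Q$ as polynomials does not by itself prevent $P(\phi)=aQ(\phi)$ on $M$ if the $\phi_j$ satisfy algebraic relations. I would therefore read the hypothesis as independence of the functions $P(\phi),Q(\phi)$ on $M$, under which $\Phi$ is non-constant on each component of $\Omega(Q)$ by unique continuation again. If needed, I would add the remark that this is the intended meaning of the statement.
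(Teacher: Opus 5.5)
Your proof is correct and is the standard argument for this result. The paper does not prove it but only quotes it from \cite{Gud-Sak-1}: your Step 1 is exactly Theorem \ref{theorem-polynomials}, and the quotient computation in your Step 2, which follows from (\ref{equation-basic-1}) and (\ref{equation-basic-2}), is all that is left.

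Your caveat in Step 3 is well founded, because the literal statement is false. Take the eigenfamily of Example \ref{example-basic-projective-space} on $\cn P^3$ with $\alpha=2$. There $\phi_{13}\phi_{24}=\phi_{14}\phi_{23}$, so the linearly independent quadratics $P=w_{13}w_{24}$ and $Q=w_{14}w_{23}$ give the constant quotient $1$. Taking instead $Q=w_{13}w_{24}-w_{14}w_{23}$ gives $\Omega(Q)=\emptyset$. The hypothesis must therefore be read as linear independence of the functions $P(\phi)$ and $Q(\phi)$ on a connected $M$. Under that reading, your unique-continuation argument gives both the density of $\Omega(Q)$ and non-constancy.
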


It is therefore interesting to construct explicit  eigenfamilies (see Definition \ref{definition-eigenfamily}) of complex-valued functions on Riemannian manifolds.  For such objects on various Lie groups and symmetric spaces,  see for example \cite{Gud-Sak-1}, \cite{Gud-Sak-2} and  \cite{Gha-Gud-5}.

The main aim of this work is to improve the construction method by generalising Theorem \ref{theorem-polynomials} below, to the following useful result.

\begin{theorem}\label{theorem-rational}
	Let $\E=\{\phi_1,\phi_2,\dots,\phi_n\}$ be a $(\lambda,\mu)$-family on a Riemannian manifold $(M,g)$,  $d\in\qn^+$ and $\alpha,\beta\in \L^{n-1}_d$ with 
	$$
	\L^{n-1}_d=\{\alpha=(\alpha_1,\alpha_2,\dots,\alpha_n)\in\qn^n\,|\,\alpha_1+\alpha_2+\cdots +\alpha_n=d,\ \alpha_k\in \qn^+_0\}.
	$$
	Further, let the complex-valued functions $\Phi_\alpha:U_\alpha\to\cn$ and $\Phi_\beta:U_\beta\to\cn$ be defined on the appropriate open subsets $U_\alpha,U_\beta$ of $M$ by
	$$
	\Phi_\alpha=\phi_1^{\alpha_1}\phi_2^{\alpha_2}\cdots\phi_n^{\alpha_n}\ \ \text{and}\ \ \Phi_\beta=\phi_1^{\beta_1}\phi_2^{\beta_2}\cdots\phi_n^{\beta_n},
	$$
	respectively.  Then the conformality operator $\kappa$ and the tension field $\tau$ satisfy
	$$
	\kappa(\Phi_\alpha,\Phi_\beta)=d^2\mu\cdot \Phi_\alpha\, \Phi_\beta\ \ \text{and}\ \ 
	\tau(\Phi_\alpha)=(d\,\lambda+d(d-1)\,\mu)\cdot \Phi_\alpha.
	$$
\end{theorem}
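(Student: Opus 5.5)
The plan is a direct computation with the two defining identities of a $(\lambda,\mu)$-family, $\tau(\phi_j)=\lambda\,\phi_j$ and $\kappa(\phi_j,\phi_k)=\mu\,\phi_j\phi_k$. Here $\tau(\phi)=\Delta\phi$ and $\kappa(\phi,\psi)=g(\nabla\phi,\nabla\psi)$, extended complex-bilinearly. On the open set $U_\alpha$ we fix a branch of each $\phi_k^{\alpha_k}$; this is possible locally wherever $\phi_k\neq 0$ when $\alpha_k\notin\mathbb{Z}$. Since both identities are local and involve only first and second derivatives, it is enough to work there. There the chain rule gives
$$
\nabla\Phi_\alpha=\Phi_\alpha\sum_{k=1}^n\alpha_k\,\frac{\nabla\phi_k}{\phi_k},
$$
that is, $\nabla\Phi_\alpha=\Phi_\alpha\,\nabla\log\Phi_\alpha$ with $\log\Phi_\alpha=\sum_k\alpha_k\log\phi_k$ for a local branch of each logarithm. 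This logarithmic viewpoint is the key simplification, because it makes the rational exponents behave exactly like integer ones.

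\emph{Conformality operator.} By bilinearity,
$$
\kappa(\Phi_\alpha,\Phi_\beta)=\Phi_\alpha\Phi_\beta\sum_{j,k}\alpha_j\beta_k\,\frac{\kappa(\phi_j,\phi_k)}{\phi_j\phi_k}
=\mu\,\Phi_\alpha\Phi_\beta\sum_{j,k}\alpha_j\beta_k .
$$
Since $\sum_j\alpha_j=\sum_k\beta_k=d$, the double sum equals $d\cdot d=d^2$, which gives the first formula.

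\emph{Tension field.} We write $\Phi_\alpha=\exp(L)$ with $L=\sum_k\alpha_k\log\phi_k$ and use $\tau(e^L)=e^L\bigl(\tau(L)+\kappa(L,L)\bigr)$. For each $k$,
$$
\tau(\log\phi_k)=\frac{\tau(\phi_k)}{\phi_k}-\frac{\kappa(\phi_k,\phi_k)}{\phi_k^2}=\lambda-\mu .
$$
Hence $\tau(L)=d(\lambda-\mu)$. By the same computation as in the previous step, $\kappa(L,L)=\sum_{j,k}\alpha_j\alpha_k\,\mu=d^2\mu$. Adding,
$$
\tau(\Phi_\alpha)=\bigl(d\lambda-d\mu+d^2\mu\bigr)\Phi_\alpha=\bigl(d\lambda+d(d-1)\mu\bigr)\Phi_\alpha,
$$
as claimed.

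The only genuinely delicate point is justifying the use of non-integer powers and logarithms of complex-valued functions. This is where the phrase ``appropriate open subsets'' enters. I would handle it by restricting to open sets on which each $\phi_k$ with $\alpha_k\notin\mathbb{Z}$ (or $\beta_k\notin\mathbb{Z}$) is non-vanishing and admits a smooth branch of $\log\phi_k$. On such sets, the identities $\nabla\phi_k^{a}=a\,\phi_k^{a-1}\nabla\phi_k$ and $\tau(\phi_k^a)=a\,\phi_k^{a-1}\tau(\phi_k)+a(a-1)\phi_k^{a-2}\kappa(\phi_k,\phi_k)$ are just the chain rule for the holomorphic function $z\mapsto z^a$ composed with $\phi_k$. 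Once this is in place, everything else is the bilinear bookkeeping above. This generalises Theorem \ref{theorem-polynomial}'s underlying computation from integer exponents to exponents in $\qn^+_0$.
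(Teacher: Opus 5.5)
Your proof is correct. For $\kappa$ it is in substance the paper's computation: the paper expands with (\ref{equation-basic-2}) and uses Lemma \ref{lemma-kappa}, $\kappa(\phi^\alpha,\psi^\beta)=\alpha\beta\,\phi^{\alpha-1}\psi^{\beta-1}\kappa(\phi,\psi)$, which is your logarithmic-derivative formula applied one factor at a time.

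For $\tau$ you take a genuinely different route. The paper first proves the one-factor formula $\tau(\phi^\alpha)=(\lambda\alpha+\mu\alpha(\alpha-1))\phi^\alpha$ (Lemma \ref{lemma-tau}). It then inducts on the number of factors: it splits $\Phi_\alpha=\phi_n^{\alpha_n}\cdot\prod_{j<n}\phi_j^{\alpha_j}$, applies (\ref{equation-basic-1}), and evaluates the cross term with the $\kappa$-formula. The coefficients then recombine as $\alpha_n(\alpha_n-1)+2\alpha_n s+s(s-1)=d(d-1)$, where $s=\alpha_1+\cdots+\alpha_{n-1}$. You instead linearise via $\Phi_\alpha=e^L$, so everything reduces to the constants $\tau(\log\phi_k)=\lambda-\mu$ and $\kappa(\log\phi_j,\log\phi_k)=\mu$, together with $\tau(e^L)=e^L(\tau(L)+\kappa(L,L))$.

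What each route buys:
\begin{itemize}
\item Yours avoids the induction and the cross-term bookkeeping. That bookkeeping is where the paper's display drops the factor $\mu$ in $2\alpha_n(\alpha_1+\cdots+\alpha_{n-1})\Phi_\alpha$.
\item Yours also shows that only $\sum_k\alpha_k=d$ matters. Rationality and non-negativity of the exponents play no role, so the formulas hold for arbitrary real or complex exponents wherever the branches exist.
\item The paper's route stays inside the Leibniz-rule framework of Theorem \ref{theorem-polynomials} and yields the single-factor lemmas as by-products.
\end{itemize}

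One point to tidy up. You restrict only to where the $\phi_k$ with non-integer exponent are non-zero, but you then take $\log\phi_k$ and divide by $\phi_k$ for every $k$ with $\alpha_k\neq0$, including integer exponents. You can fix this in either of two ways:
\begin{itemize}
\item Work where all these $\phi_k$ are non-zero and extend by continuity. Near points where an integer-exponent factor vanishes identically, both sides are simply zero.
\item Handle the integer-exponent factors with the ordinary product rule.
\end{itemize}
The paper makes the same implicit restriction through the factors $\phi_j^{\alpha_j-1}$ and $\phi^{\alpha-2}$.
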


\begin{notation}  Every complex number $z\in\cn\setminus\rn^-_0$ has a unique presentation in polar coordinates as $z=r\cdot e^{i\theta}$, where $r\in\rn^+$ is a positive real number and $\theta\in (-\pi,+\pi)$.  For a rational number $q/p\in\qn$, with $p,q$ relatively prime, we use the notation
$$z^{p/q}=r^{p/q}\cdot e^{i\theta p/q}.$$
\end{notation}  

\section{Eigenfunctions and Eigenfamilies}
\label{section-eigenfunctions}

Let $(M,g)$ be an $m$-dimensional Riemannian manifold and $T^{\cn}M$ be the complexification of the tangent bundle $TM$ of $M$. We extend the metric $g$ to a complex-bilinear form on $T^{\cn}M$.  Then the gradient $\nabla\phi$ of a complex-valued function $\phi:(M,g)\to\cn$ is a section of $T^{\cn}M$.  In this situation, the well-known complex linear {\it Laplace-Beltrami operator} (alt. {\it tension field}) $\tau$ on $(M,g)$ acts locally on $\phi$ as follows
$$
\tau(\phi)=\Div (\nabla \phi)=\sum_{i,j=1}^m\frac{1}{\sqrt{|g|}} \frac{\partial}{\partial x_j}
\left(g^{ij}\, \sqrt{|g|}\, \frac{\partial \phi}{\partial x_i}\right).
$$
For two complex-valued functions $\phi,\psi:(M,g)\to\cn$ we have the following well-known fundamental relation
\begin{equation}\label{equation-basic-1}
\tau(\phi\cdot \psi)=\tau(\phi)\cdot\psi +2\cdot\kappa(\phi,\psi)+\phi\cdot\tau(\psi),
\end{equation}
where the complex bilinear {\it conformality operator} $\kappa$ is given by 
$$
\kappa(\phi,\psi)=g(\nabla \phi,\nabla \psi).
$$ 
Locally this satisfies 
$$\kappa(\phi,\psi)=\sum_{i,j=1}^mg^{ij}\cdot\frac{\partial\phi}{\partial x_i}\frac{\partial \psi}{\partial x_j}.$$
A useful formula for the symmetric conformality operator $\kappa$ is the following
\begin{equation}\label{equation-basic-2}
\kappa(\phi,\psi\cdot\xi)=\kappa(\phi,\psi)\cdot\xi+\psi\cdot\kappa(\phi,\xi).
\end{equation}
\smallskip

\begin{definition}\cite{Gud-Sak-1}\label{definition-eigenfamily}
Let $(M,g)$ be a Riemannian manifold. Then a complex-valued function $\phi:M\to\cn$ is said to be an {\it eigenfunction} if it is eigen both with respect to the Laplace-Beltrami operator $\tau$ and the conformality operator $\kappa$ i.e. there exist complex numbers $\lambda,\mu\in\cn$ such that $$\tau(\phi)=\lambda\cdot\phi\ \ \text{and}\ \ \kappa(\phi,\phi)=\mu\cdot \phi^2.$$	
A set $\E =\{\phi_i:M\to\cn\ |\ i\in I\}$ of complex-valued functions is said to be an {\it eigenfamily} on $M$ if there exist complex numbers $\lambda,\mu\in\cn$ such that for all $\phi,\psi\in\E$ we have 
$$\tau(\phi)=\lambda\cdot\phi\ \ \text{and}\ \ \kappa(\phi,\psi)=\mu\cdot \phi\,\psi.$$ 
\end{definition}

\begin{example}\cite{Gud-Mun-1}
	\label{example-basic-sphere} 
	Let $S^{2n-1}$ be the odd-dimensional unit sphere in the standard Euclidean space $\cn^{n}\cong\rn^{2n}$ and define $\phi_1,\dots,\phi_n:S^{2n-1}\to\cn$ by
	$$\phi_j:(z_1,\dots,z_{n})\mapsto \frac{z_j}{\sqrt{|z_1|^2+\cdots +|z_n|^2}}.$$  Then the tension field $\tau$ and the conformality operator $\kappa$ on $S^{2n-1}$ satisfy	$$\tau(\phi_j)=-\,(2n-1)\cdot\phi_j\ \ \text{and}\ \ \kappa(\phi_j,\phi_k)=-\,1\cdot \phi_j\cdot\phi_k.$$
\end{example}

For the standard complex projective space $\cn P^n$ we similarly have a complex multi-dimensional eigenfamily.

\begin{example}\cite{Gud-Mun-1}\label{example-basic-projective-space}
	Let $\cn P^n$ be the standard $n$-dimensional complex projective space. For a fixed integer $1\le\alpha < n+1$ and some $1\le j\le\alpha < k\le n+1$  define the function $\phi_{jk}:\cn P^n\to\cn$ by
	$$\phi_{jk}:[z_1,\dots,z_{n+1}]\mapsto \frac
	{z_j\cdot\bar z_k}{z_1\cdot \bar z_1+\cdots + z_{n+1}\cdot \bar z_{n+1}}.$$  
	Then the tension field $\tau$ and the conformality operator $\kappa$ on $\cn P^n$ satisfy
	$$\tau(\phi_{jk})=-\,4(n+1)\cdot\phi_{jk}\ \ \text{and}\ \ \kappa(\phi_{jk},\phi_{lm})=-\,4\cdot \phi_{jk}\cdot\phi_{lm}.$$
\end{example}

With the following result from \cite{Gha-Gud-4}, the authors show that a given eigenfamily $\E$ can be used to produce a large collection $\P_d(\E)$ of such objects.

\begin{theorem}\label{theorem-polynomials}
	Let $(M,g)$ be a Riemannian manifold and the set of complex-valued functions  $$\E=\{\phi_i:M\to\cn\,|\,i=1,2,\dots,n\}$$ 
	be a finite eigenfamily i.e. there exist complex numbers $\lambda,\mu\in\cn$ such that for all $\phi,\psi\in\E$ $$\tau(\phi)=\lambda\cdot\phi\ \ \text{and}\ \ \kappa(\phi,\psi)=\mu\cdot\phi\,\psi.$$  
	Then the set of complex homogeneous polynomials of degree $d$
	$$\P_d(\E)=\{P:M\to\cn\,|\, P\in\cn[\phi_1,\phi_2,\dots,\phi_n],\, P(\alpha\cdot\phi)=\alpha^d\cdot P(\phi),\, \alpha\in\cn\}$$ 
	is an eigenfamily on $M$ such that for all $P,Q\in\P_d(\E)$ we have
	$$\tau(P)=(d\,\lambda+d(d-1)\,\mu)\cdot P\ \ \text{and}\ \ \kappa(P,Q)=d^2\mu\cdot P\, Q.$$
\end{theorem}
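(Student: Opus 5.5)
The statement to prove is the last one displayed, Theorem \ref{theorem-polynomials}: for homogeneous polynomials $P,Q$ of degree $d$ in an eigenfamily $\E$ we have $\tau(P)=(d\lambda+d(d-1)\mu)P$ and $\kappa(P,Q)=d^2\mu PQ$. Both $\tau$ and $\kappa$ are complex (bi)linear, so it suffices to treat monomials
$$\Phi_\alpha=\phi_1^{\alpha_1}\cdots\phi_n^{\alpha_n},\qquad |\alpha|=\alpha_1+\cdots+\alpha_n=d,\ \alpha_k\in\mathbb{N}_0,$$
and then extend by linearity. Because $\kappa(P,Q)$ is bilinear in $(P,Q)$ while $PQ$ is also bilinear, the identity $\kappa(\Phi_\alpha,\Phi_\beta)=d^2\mu\,\Phi_\alpha\Phi_\beta$ for all monomials gives the claim for all $P,Q\in\P_d(\E)$.

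\textbf{Conformality operator.} By the Leibniz rule (\ref{equation-basic-2}), applied in both arguments since $\kappa$ is symmetric, we get
$$\kappa(\Phi_\alpha,\Phi_\beta)=\sum_{j,k}\alpha_j\beta_k\,\frac{\Phi_\alpha}{\phi_j}\frac{\Phi_\beta}{\phi_k}\,\kappa(\phi_j,\phi_k).$$
Each term is understood as a polynomial: it is zero when $\alpha_j=0$ or $\beta_k=0$, and otherwise the factor $\phi_j$ is simply removed. The eigenfamily relation $\kappa(\phi_j,\phi_k)=\mu\phi_j\phi_k$ then collapses every term to $\alpha_j\beta_k\mu\,\Phi_\alpha\Phi_\beta$. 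Summing over $j,k$ gives $(\sum_j\alpha_j)(\sum_k\beta_k)\mu\,\Phi_\alpha\Phi_\beta=d^2\mu\,\Phi_\alpha\Phi_\beta$.

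\textbf{Tension field.} I would argue by induction on the degree $d$. The case $d=1$ is exactly the eigenfamily hypothesis. For $d\ge2$, write $\Phi_\alpha=\phi_j\Psi$ with $\Psi$ a monomial of degree $d-1$, and apply (\ref{equation-basic-1}):
$$\tau(\Phi_\alpha)=\tau(\phi_j)\Psi+2\kappa(\phi_j,\Psi)+\phi_j\tau(\Psi).$$
We evaluate the three terms separately.
\begin{itemize}
\item $\tau(\phi_j)\Psi=\lambda\Phi_\alpha$, by the eigenfamily hypothesis.
\item $\kappa(\phi_j,\Psi)=(d-1)\mu\,\phi_j\Psi$, by the same Leibniz computation as above, applied with degrees $1$ and $d-1$.
\item $\phi_j\tau(\Psi)=((d-1)\lambda+(d-1)(d-2)\mu)\Phi_\alpha$, by the induction hypothesis.
\end{itemize}
Adding, the coefficient of $\Phi_\alpha$ is
$$\lambda+2(d-1)\mu+(d-1)\lambda+(d-1)(d-2)\mu=d\lambda+d(d-1)\mu,$$
which is the claimed eigenvalue. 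Linearity of $\tau$ then extends the result from monomials to all of $\P_d(\E)$.

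\textbf{Main obstacle.} There is little genuine difficulty here. The only point needing care is the bookkeeping of the Leibniz expansion for repeated factors and zero exponents. Treating $\kappa(\,\cdot\,,\phi^a)=a\phi^{a-1}\kappa(\,\cdot\,,\phi)$ as an identity of polynomials, proved by induction on $a$ from (\ref{equation-basic-2}), handles both cases cleanly. It also avoids dividing by $\phi_j$, which could vanish somewhere on $M$.
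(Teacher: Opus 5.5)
Your proof is correct and is essentially the argument the paper gives for the monomial case in its proof of Theorem \ref{theorem-rational}; the paper only quotes the polynomial statement itself from earlier work. Both use a double Leibniz expansion via (\ref{equation-basic-2}) for $\kappa$ and an induction built on the product rule (\ref{equation-basic-1}) for $\tau$, with linearity in your version then handling general $P,Q$. The one difference is cosmetic: you split off a single factor $\phi_j$ and induct on the degree, while the paper splits off the whole power $\phi_n^{\alpha_n}$ and inducts on the number of factors via Lemma \ref{lemma-tau}, as its rational exponents force, and your version has the small bonus of never dividing by $\phi_j$, so it holds on all of $M$.
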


\section{New Eigenfamilies via Rational Exponents}
\label{section-rational}

For a positive rational number $d\in\qn^+$ let $\L^{n-1}_d$ be the lattice in $\qn^n$ with 
$$
\L^{n-1}_d=\{\alpha=(\alpha_1,\alpha_2,\dots,\alpha_n)\in\qn^n\,|\,\alpha_1+\alpha_2+\cdots +\alpha_n=d,\ \alpha_k\in \qn^+_0\}.
$$

\begin{lemma}\label{lemma-kappa}
Let $(M,g)$ be a Riemannian manifold and $\E$ be a $(\lambda,\mu)$-eigenfamily on $M$.  If $\alpha,\beta\in\qn^+$ are two positive rational numbers and $\phi,\psi\in\E$, then the conformality operator $\kappa$ satisfies
$$
\kappa(\phi^\alpha,\psi^\beta)
=\mu\,\alpha\beta\cdot\phi^\alpha\psi^\beta.
$$
\end{lemma}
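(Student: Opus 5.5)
The plan is a chain rule computation on the open set where $\phi$ and $\psi$ take values in $\cn\setminus\rn^-_0$. There the principal branches $z\mapsto z^\alpha$ and $z\mapsto z^\beta$ from the Notation are holomorphic, so the powers $\phi^\alpha$ and $\psi^\beta$ are smooth.

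\emph{Step 1: gradients of the powers.} Since $\kappa$ is complex bilinear and the gradient is complex linear, it suffices to know $\nabla(\phi^\alpha)$. For a holomorphic function $f$ of one variable, the chain rule in local coordinates gives $\partial f(\phi)/\partial x_i = f'(\phi)\,\partial\phi/\partial x_i$. This holds because $\phi$ enters only through $z$, not $\bar z$. Taking $f(z)=z^\alpha$ with $f'(z)=\alpha z^{\alpha-1}$ on the principal branch, we get
$$
\nabla(\phi^\alpha)=\alpha\,\phi^{\alpha-1}\,\nabla\phi ,
$$
and similarly $\nabla(\psi^\beta)=\beta\,\psi^{\beta-1}\nabla\psi$.

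\emph{Step 2: insert the eigenfamily relation.} Using bilinearity of $\kappa$ and the hypothesis $\kappa(\phi,\psi)=\mu\,\phi\psi$ from Definition \ref{definition-eigenfamily}, we obtain
$$
\kappa(\phi^\alpha,\psi^\beta)=\alpha\beta\,\phi^{\alpha-1}\psi^{\beta-1}\,\kappa(\phi,\psi)=\mu\,\alpha\beta\,\phi^{\alpha-1}\psi^{\beta-1}\,\phi\psi .
$$

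\emph{Step 3: recombine the powers.} The last step uses the identity $z^{\alpha-1}\cdot z=z^\alpha$ for the principal branch. This holds because $z^{\alpha-1}=r^{\alpha-1}e^{i\theta(\alpha-1)}$ and $z=re^{i\theta}$, with the same $\theta\in(-\pi,\pi)$. Applying it to $\phi$ and to $\psi$ gives $\mu\,\alpha\beta\,\phi^\alpha\psi^\beta$, as claimed.

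I expect the only real obstacle to be justifying the chain rule and the derivative formula $f'(z)=\alpha z^{\alpha-1}$ for the branch used in the Notation. One has to check that $z\mapsto r^{\alpha}e^{i\alpha\theta}$ is holomorphic on $\cn\setminus\rn^-_0$, which follows from $z^\alpha=\exp(\alpha\operatorname{Log} z)$. One also has to restrict to the open set where $\phi,\psi\notin\rn^-_0$ (the sets $U_\alpha$ of Theorem \ref{theorem-rational}). Once that is settled, everything else is the bilinear algebra above. An alternative route would be to use formula \eqref{equation-basic-2} for integer powers and then differentiate the identity $(\phi^{1/q})^q=\phi$; the direct chain rule argument is cleaner, so I would use it.
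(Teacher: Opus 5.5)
Your proposal is correct and follows the same route as the paper, which applies the chain rule $X_j(\phi^\alpha)=\alpha\,\phi^{\alpha-1}X_j(\phi)$ in a local orthonormal frame, pulls out $\alpha\beta\,\phi^{\alpha-1}\psi^{\beta-1}$, and substitutes $\kappa(\phi,\psi)=\mu\,\phi\psi$. Your extra care about the principal branch, restricting to where $\phi,\psi\notin\rn^-_0$, and checking $z^{\alpha-1}z=z^\alpha$ makes explicit what the paper leaves implicit.
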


\begin{proof}
Let $\{X_1,X_2,\dots,X_m\}$ be a local orthonormal frame for the tangent bundle $TM$ of $M$.  Then
\begin{eqnarray*}
\kappa(\phi^\alpha,\psi^\beta)
&=&\sum_{j=1}^mX_j(\phi^\alpha)\cdot X_j(\psi^\beta)\\
&=&\sum_{j=1}^m\alpha\,\phi^{\alpha-1}X_j(\phi)\cdot\beta\,\psi^{\beta-1}X_j(\psi)\\
&=&\alpha\beta\,\phi^{\alpha-1}\psi^{\beta-1}\,\kappa(\phi,\psi)\\
&=&\mu\,\alpha\beta\,\phi^\alpha\psi^\beta.
\end{eqnarray*}
\end{proof}

\begin{lemma}\label{lemma-tau}
Let $(M,g)$ be a Riemannian manifold and $\E$ be a $(\lambda,\mu)$-eigenfamily on $M$.  If $\alpha,\beta\in\qn^+$ are two positive rational numbers and $\phi,\psi\in\E$, then the tension field $\tau$ satisfies
$$
\tau(\phi^\alpha)=(\lambda\cdot\alpha +\mu\cdot\alpha(\alpha-1))\cdot\phi^\alpha.
$$
$$
\tau(\phi^\alpha\,\psi^\beta)=
\lambda\cdot(\alpha+\beta)
+\mu\cdot (\alpha+\beta)(\alpha+\beta-1)
\cdot\phi^\alpha\,\psi^\beta,
$$
\end{lemma}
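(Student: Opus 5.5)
The plan is to derive both identities from the two basic rules recalled in Section \ref{section-eigenfunctions}: the chain rule for $\tau$ applied to a composition $f\circ\phi$ with $f$ holomorphic, and the product rule (\ref{equation-basic-1}). We work on the open set where the chosen branch $z\mapsto z^\alpha$ is holomorphic, that is, where $\phi$ and $\psi$ avoid $\rn^-_0$. There $\nabla(\phi^\alpha)=\alpha\phi^{\alpha-1}\nabla\phi$, exactly as used in the proof of Lemma \ref{lemma-kappa}.

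\emph{First formula.} Taking the divergence of $\nabla(\phi^\alpha)=\alpha\phi^{\alpha-1}\nabla\phi$ gives
\[
\tau(\phi^\alpha)=\alpha\phi^{\alpha-1}\tau(\phi)+\alpha(\alpha-1)\phi^{\alpha-2}\kappa(\phi,\phi).
\]
This is the general identity $\tau(f\circ\phi)=f'(\phi)\tau(\phi)+f''(\phi)\kappa(\phi,\phi)$ for holomorphic $f$. It holds because $\kappa$ is complex bilinear, so no $\bar\partial$-terms appear. Substituting $\tau(\phi)=\lambda\phi$ and $\kappa(\phi,\phi)=\mu\phi^2$ yields $(\lambda\alpha+\mu\alpha(\alpha-1))\phi^\alpha$. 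The only point needing care is that $\phi^{\alpha-1}\cdot\phi=\phi^\alpha$ and $\phi^{\alpha-2}\phi^2=\phi^\alpha$ hold for the principal branch. This is true since the arguments simply add: $(\alpha-1)\theta+\theta=\alpha\theta$.

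\emph{Second formula.} Apply (\ref{equation-basic-1}) to the product $\phi^\alpha\cdot\psi^\beta$:
\[
\tau(\phi^\alpha\psi^\beta)=\tau(\phi^\alpha)\psi^\beta+2\kappa(\phi^\alpha,\psi^\beta)+\phi^\alpha\tau(\psi^\beta).
\]
Insert the first formula twice and use Lemma \ref{lemma-kappa} for the middle term. The coefficient of $\phi^\alpha\psi^\beta$ is then
\[
\lambda(\alpha+\beta)+\mu\bigl(\alpha(\alpha-1)+\beta(\beta-1)+2\alpha\beta\bigr)=\lambda(\alpha+\beta)+\mu(\alpha+\beta)(\alpha+\beta-1).
\]
This uses $\alpha^2+2\alpha\beta+\beta^2-\alpha-\beta=(\alpha+\beta)(\alpha+\beta-1)$.

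The computation is routine. The only potential obstacle is justifying the chain rule for non-integer powers. This is handled by restricting to the open subset where $\phi,\psi\notin\rn^-_0$, on which $z^\alpha$ is a holomorphic function with derivative $\alpha z^{\alpha-1}$. The product formula in the statement should of course be read with the scalar factor in brackets multiplying $\phi^\alpha\psi^\beta$.
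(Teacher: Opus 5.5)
Your proof is correct and follows essentially the paper's route: the paper obtains the first formula by the same chain-rule computation, carried out in a local orthonormal frame with $\nabla_{X_k}X_k=0$ at the point, which is your identity $\tau(f\circ\phi)=f'(\phi)\tau(\phi)+f''(\phi)\kappa(\phi,\phi)$ written out in coordinates. The paper's proof stops after the first formula. Your derivation of the second one, from (\ref{equation-basic-1}), Lemma \ref{lemma-kappa} and $\alpha(\alpha-1)+\beta(\beta-1)+2\alpha\beta=(\alpha+\beta)(\alpha+\beta-1)$, is the step the paper uses implicitly in the induction for Theorem \ref{theorem-rational}.
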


\begin{proof}
Let $x\in M$ be an arbitrary element and $\{X_1,X_2,\dots,X_m\}$ be a local orthonormal frame for the tangent bundle $TM$ of $M$ in a neighbourhood of $x$ such that $(\nab{X_k}{X_k})(x)=0$.  At the point $x\in M$, we then have 
\begin{eqnarray*}
\tau(\phi^\alpha)
&=&\sum_{j=1}^mX_j^2(\phi^\alpha)-(\nab{X_k}{X_k})(\phi^\alpha)\\
&=&\sum_{j=1}^mX_j(\alpha X_j(\phi)\cdot\phi^{\alpha-1})\\
&=&\alpha\sum_{j=1}^mX^2_j(\phi)\cdot\phi^{\alpha-1}+X_j(\phi)\cdot X_j(\phi^{\alpha-1}))\\
&=&\alpha(\tau(\phi)\cdot\phi^{\alpha-1}+(\alpha-1)\sum_{j=1}^mX_j(\phi)\cdot X_j(\phi)\cdot \phi^{\alpha-2}))\\
&=&(\lambda\,\alpha+\mu\cdot\alpha(\alpha-1))\cdot \phi^\alpha.
\end{eqnarray*}
\end{proof}

We are now ready to prove our main result presented in Theorem \ref{theorem-rational}.

\begin{proof}
Here we shall make repeated applications of Equations (\ref{equation-basic-1}), (\ref{equation-basic-2}), Lemma \ref{lemma-kappa} and Lemma \ref{lemma-tau}.  For the conformality operator $\kappa$ we yield
\begin{eqnarray*}
\kappa(\Phi_\alpha,\Phi_\beta)
&=&\kappa(\prod_{j=1}^n\phi_j^{\alpha_k},\prod_{k=1}^n\phi_k^{\beta_k})\\
&=&\sum_{j,k=1}^n\prod_{r\neq j}\phi_r^{\alpha_r}\prod_{s\neq k}\phi_s^{\beta_s}
\cdot\kappa(\phi_j^{\alpha_j},\phi_k^{\beta_k})\\
&=&\sum_{j,k=1}^n\prod_{r\neq j}\phi_r^{\alpha_r}\prod_{s\neq k}\phi_s^{\beta_s}
\cdot{\alpha_j}{\beta_k}
\cdot\phi_j^{\alpha_j-1}\phi_k^{\beta_k-1}\cdot\kappa(\phi_j,\phi_k)\\
&=&\mu\cdot \Phi_\alpha\,\Phi_\beta\sum_{j,k=1}^n\alpha_j\beta_k\\
&=&\mu\,d^2\cdot\Phi_\alpha\,\Phi_\beta.
\end{eqnarray*}
Employing the induction hypothesis we then get the following for the tension field $\tau$
\begin{eqnarray*}
\tau(\Phi_\alpha)
&=&\tau(\phi^{\alpha_{n}}_{n}\cdot\prod_{j=1}^{n-1}\phi_j^{\alpha_j})\\
&=&\tau(\phi^{\alpha_{n}}_{n})\cdot\prod_{j=1}^{n-1}\phi_j^{\alpha_j}
+2\cdot\kappa(\phi^{\alpha_{n}}_{n},\prod_{j=1}^{n-1}\phi_j^{\alpha_j})+
\phi^{\alpha_{n}}_{n}\cdot\tau(\prod_{j=1}^{n-1}\phi_j^{\alpha_j})\\
&=&(\lambda\cdot\alpha_{n}+\mu\cdot\alpha_n(\alpha_n-1))\cdot\phi_n^{\alpha_n}\cdot\prod_{j=1}^{n-1}\phi_j^{\alpha_j}\\
&&\qquad +\,2\cdot\kappa(\phi_n^{\alpha_n},\prod_{j=1}^{n-1}\phi_j^{\alpha_j})+\phi_n^{\alpha_n}\cdot\tau(\prod_{j=1}^{n-1}\phi_j^{\alpha_j})\\
&=&(\lambda\cdot\alpha_{n}+\mu\cdot\alpha_n(\alpha_n-1))\cdot\Phi_\alpha\\
&&\qquad  +\,2\,\alpha_n\,(\alpha_1+\cdots +\alpha_{n-1})\cdot\Phi_\alpha\\
&&+(\alpha_1+\cdots+\alpha_{n-1})(\lambda+\mu(\alpha_1+\dots +\alpha_{n-1}-1))\cdot\Phi_\alpha\\
&=&(\lambda\cdot d+\mu\cdot d(d-1))\Phi_\alpha.
\end{eqnarray*}
\end{proof}

\section{New Harmonic Morphisms}
\label{section-examples}

In this section we show how our observations yield new harmonic morphisms defined on the standard odd-dimensional spheres $S^{2m-1}$ in the Euclidean space $\rn^{2m}$.

\begin{example}
Let $S^{2m-1}$ be the odd-dimensional unit sphere in the standard Euclidean space $\rn^{2m}\cong\cn^{m}=\{(z_1,z_2,\dots,z_m)\,|\, z_j\in\cn\}$.  According to Example \ref{example-basic-sphere} the restrictions $z_1,z_2,\dots,z_m:S^{2m-1}\to\cn$ of the coordinate functions to the unit sphere form a $(\lambda,\mu)$-eigenfamily on $S^{2m-1}$.  

For a positive rational number $d\in\qn^+$ let $\L^{n-1}_d$ be the lattice in $\qn^n$ with 
$$
\L^{n-1}_d=\{\alpha=(\alpha_1,\alpha_2,\dots,\alpha_n)\in\qn^n\,|\,\alpha_1+\alpha_2+\cdots +\alpha_n=d,\ \alpha_k\in\qn^+_0\}.
$$
Then 
$$
\E_d^{n-1}=\{\Phi_\alpha=z_1^{\alpha_1}z_2^{\alpha_2}\cdots z_n^{\alpha_n}:S^{2m-1}\to\cn\,|\,\alpha\in\L_d^{n-1}\}
$$
is an eigenfamily on the odd dimensional sphere $S^{2m-1}$. If $\{\Phi_1,\Phi_2,\dots,\Phi_t\}$ is a finite subset of $\E_d^{n-1}$ and $P,Q:\cn^t\to\cn$ are linearily independent homogeneous polynomials of the same positive degree then the quotient
$$
\frac{P(\Phi_1,\dots ,\Phi_t)}{Q(\Phi_1,\dots ,\Phi_t)}
$$ 
is a non-constant harmonic morphism on the open and dense subset 
$$
\Omega (Q)=\{x\in M\,|\, Q(\Phi_1(p),\dots ,\Phi_t(p))\neq 0\}.
$$
\end{example}

\end{document}